\newtheorem{thm}{Theorem}       \newtheorem{propo}{Proposition}
\newtheorem{lemma}{Lemma}       \newtheorem{coro}{Corollary}
\let\paragraph\subsection
\title{Poincar\'e Hopf for vector fields on graphs}
\author{Oliver Knill} \date{11/10/2019}
\address{Department of Mathematics \\ Harvard University \\ Cambridge, MA, 02138 }
\subjclass{ 05C10, 57M15,03H05, 62-07, 62-04 }
\begin{document}

\begin{abstract}
We generalize the Poincar\'e-Hopf theorem $\sum_v i(v) = \chi(G)$ to vector fields on a finite simple 
graph $\Gamma=(V,E)$ with Whitney complex $G$. To do so, we define a directed simplicial complex
as a finite abstract simplicial complex equipped with a bundle map $F: G \to V$ telling which vertex 
$T(x) \in x$ dominates the simplex $x$. The index of a vertex is then $i_F(v)=\chi(F^{-1}v)$.
We get a flow by adding a section map $F: V \to G$. The resulting 
map $G \to G$ is a discrete model for a differential equation
$x'=F(x)$ on a compact manifold. Examples of directed complexes are defined by 
Whitney complexes defined by digraphs with no cyclic triangles or gradient fields 
on finite simple graphs defined by a locally injective function \cite{poincarehopf}. 
Other examples come from internal set theory. The result extends to simplicial complexes 
equipped with an energy function $H:G \to \mathbb{Z}$ that implements a divisor. 
The index sum is then the total energy. 
\end{abstract} 
\maketitle

\section{Poincar\'e-Hopf}

\paragraph{}
Let $G$ be a {\bf finite abstract simplicial complex} with {\bf vertex set} $V=\bigcup_{x \in G} x$.
This means that all sets $x \in G$ are non-empty subsets of a finite set $V$ and if $y \subset x$ 
is non-empty, then $y \in G$. 
The complex $G$ is called a {\bf directed complex} if there is a map $F: G \to V$ such that $F(x) \in x$. 
This generalizes a {\bf digraph} $(V,E)$ as the later defines 
the complex $G=\{ \{v\}, v \in V \}  \cup \{ \{ a,b \}, a \to b \in E \}$ 
with $F(a \to b) = a$. Given a {\bf divisor} in the form of an {\b energy function} $H: G \to \mathbb{Z}$
\cite{EnergizedSimplicialComplexes},
the energy of a subcomplex $A$ of $G$ is defined as $\sum_{x \in A} H(x)$. 
The {\bf total energy} of $G$ is $\chi(G)=\sum_{x \in G} H(x)$. For $H(x)=\omega(x)=(-1)^{{\rm dim}(x)}$ with 
${\rm dim}(x)=|x|-1$, the total energy is the {\bf Euler characteristic} $\chi(G)$ of $G$.

\paragraph{}
Given a directed energized complex $(G,H,F)$, define the {\bf index} 
$$  i(v)=\sum_{x \in F^{-1}(v)} H(x) = \chi(F^{-1}(v))  $$
for $v \in V$. It is the energy of the collection of simplices in $G$ {\bf pointing to $v$}. 
The following result essentially is a simple Kirchhoff conservation law:

\begin{thm}[Poincar\'e-Hopf]
$\chi(G)= \sum_{v \in V} i_F(v)$. 
\label{PH}
\end{thm}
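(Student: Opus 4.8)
The plan is to treat this as a pure bookkeeping identity: no topology is needed beyond the definitions, since $\chi$ here is simply the energy-weighted count $\sum_{x \in G} H(x)$. The one structural fact doing all the work is that $F$ is an honest function $G \to V$. Indeed, because each simplex $x \in G$ is a non-empty subset of $V$ and the defining condition $F(x) \in x$ selects one of its vertices, $F$ assigns to every $x$ exactly one value, and that value lies in $V$.

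First I would record the consequence that the fibers $F^{-1}(v)=\{x \in G : F(x)=v\}$, as $v$ ranges over $V$, form a partition of the simplex set $G$ into disjoint blocks (some of which may be empty, when a vertex dominates no simplex). Disjointness is immediate from single-valuedness of $F$, and exhaustiveness is immediate from $F$ being defined on all of $G$.

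With the partition in hand, the theorem becomes a one-line reindexing. Expanding the right-hand side by the definition $i_F(v)=\sum_{x \in F^{-1}(v)} H(x)$ gives $\sum_{v \in V} i_F(v) = \sum_{v \in V} \sum_{x \in F^{-1}(v)} H(x)$, and because the blocks $F^{-1}(v)$ partition $G$, this iterated sum collects each simplex $x$ exactly once, so it equals $\sum_{x \in G} H(x) = \chi(G)$. This is precisely the Kirchhoff-type conservation statement: summing the local indices over all vertices redistributes the energy but conserves its total.

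I do not expect a genuine obstacle; the only point requiring care is that $F$ really is total and single-valued, i.e.\ that every simplex is counted in precisely one fiber. Everything geometric is packed into the \emph{definitions}: the Poincar\'e--Hopf flavor emerges only when one specializes $H$ to $\omega(x)=(-1)^{{\rm dim}(x)}$, so that the fiber energy $i_F(v)=\chi(F^{-1}(v))$ becomes a bona fide local index and $\chi(G)$ the Euler characteristic. The proof itself is insensitive to this choice of $H$.
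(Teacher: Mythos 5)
Your proof is correct and is exactly the paper's argument, just spelled out: the paper's one-line "transporting energies along $F$ conserves the total" is precisely your observation that the fibers $F^{-1}(v)$ partition $G$, so the double sum over $v$ and $x \in F^{-1}(v)$ reindexes $\sum_{x \in G} H(x)$. No differences in approach.
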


\begin{proof}
Transporting all energies from simplices $x$ to vertices along $F$ does not change
the total energy. 
\end{proof}

\paragraph{}
If $G$ is a one-dimensional complex, a direction $F$ defines
a digraph with {\bf $1$-skeleton complex} $G$ of the Whitney complex of a graph.
Now, $i(v) = 1-\chi(S^-(v))$, where where $S^-(x))$ counts the number of incoming edges. 
Averaging the index over all possible directions produces a curvature 
$K(v) = 1-{\rm deg}(x)/2$. The later does not use the digraph 
structure and the corresponding {\bf Gauss-Bonnet formula} 
$\chi(G) =|V|-|E|=\sum_v K(v)$ is equivalent to the {\bf Euler handshake} formula. 
In general, if $F$ is not deterministic, but a random Markov map, then the index becomes 
curvature, an average of Poincar\'e-Hopf indices.

\paragraph{}
If $\Gamma=(V,E)$ is a digraph without cyclic triangles and $G$ is the
Whitney complex, then $F(x)$ can be defined as the maximal
element in the simplex $x$. The reason is that in that case, the digraph structure 
on a complete graph defines a total order and so a maximal element
in each simplex. The graph $K_3$ with cyclic order $1 \to 2 \to 3 \to 1$ 
demonstrates an example, where no total order is defined by the directions.
It is necessary to direct also the triangle to one of the vertices and
need the structure of a directed simplicial complex. 

\paragraph{}
The Poincar\'e-Hopf formula $\sum_{v} i_F(v) =\chi(G)$ in particular applies 
when a locally injective function $g$ on $V$ is given. We can then chose
$F(x)$ to be the vertex in $x$ on which $g$ is maximal \cite{poincarehopf}.
In that case 
$$  i_F(x) = 1-\chi(S^-(x)) \; , $$ 
where $S^-(x)$ is the subgraph of the unit sphere $S(x)$ generated
by the vertices $w$ in the unit sphere $S(x)$ for which $g(w)$ is 
smaller than $g(v)$. If one averages over all possible colorings $g$,
then the index expectation becomes the curvature
\cite{cherngaussbonnet, indexexpectation,indexformula,eveneuler}.

\paragraph{}
The result in Theorem~(\ref{PH}) is more general than \cite{poincarehopf}. 
We could for example assign to each simplex an injective function $g_x : x \to \mathbb{N}$ which 
orders the vertices in $x$. This is more general as the order on a 
sub-simplex $y$ of $x$ does then not have to be compatible with the 
order of $x$.  We will see below that the result in particular applies to digraphs which 
have no cyclic triangles. 

\section{Dynamics}

\paragraph{}
If we complement the map $F: G \to V$ with a map $F: V \to G$
with the property that $v \in F(v)$, then we call this a {\bf discrete vector field}.
If the idea of $F:G \to V$ was thought of as a {\bf vector bundle map} from the fibre bundle $G$ 
to the base $V$, then $F: V \to G$ corresponds to 
a {\bf section of that bundle}. Now $T=F^2:V \to V$ is a map on the finite set $V$ and 
$T=F^2: G \to G$ is a map on the finite set $G$ of simplices, which define a 
flow which is continuous in the sense that the sets $x \in G$ and $T(x) \in G$ 
are required to intersect. The orbit is a path in the {\bf connection graph} of $G$
\cite{Unimodularity,EnergyComplex}
The fact that we have now a dynamical system is the discrete analogue of the 
Cauchy-Picard existence theorem for differential equations on manifolds.

\begin{coro}[Discrete Cauchy-Picard]
A vector field $F$ defines a map $T: G \to G$, in which $x,T(x)$ intersect or
a map $T: V \to V$ in which $v,T(v)$ are in a common simplex. 
\end{coro}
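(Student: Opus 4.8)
The plan is to verify that the two constructions described in the statement are well-defined maps, which amounts to unwinding the definitions of the bundle map $F:G \to V$ and the section $F:V \to G$ and composing them. Since both pieces of data are genuine functions between finite sets, the composite $T = F^2$ is automatically a function; the only content of the corollary is that $T$ respects the incidence/intersection structure, i.e.\ that $x$ and $T(x)$ always meet (respectively that $v$ and $T(v)$ share a simplex). So the real task is a containment bookkeeping argument, not an existence argument.

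**The two cases.** First I would treat the map on simplices. Given $x \in G$, the bundle map produces the dominating vertex $v = F(x) \in x$, and the section produces a simplex $F(v) \in G$ with $v \in F(v)$; set $T(x) = F(F(x)) = F(v)$. I then observe that $v \in x$ (by the defining property $F(x)\in x$ of a directed complex) and $v \in F(v) = T(x)$ (by the defining property $v \in F(v)$ of a section). Hence $v \in x \cap T(x)$, so the two simplices intersect and the orbit stays in the connection graph as claimed. Second, for the map on vertices, given $v \in V$ I set $T(v) = F(F(v))$, where now $F(v) \in G$ is a simplex and $F(\,F(v)\,) \in F(v)$ is its dominating vertex. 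I then check that both $v$ and $T(v)=F(F(v))$ lie in the common simplex $x := F(v)$: indeed $v \in F(v) = x$ by the section property, and $T(v) = F(x) \in x$ by the directedness property. This is exactly the assertion that $v$ and $T(v)$ lie in a common simplex.

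**What makes it routine, and the one subtlety.** Each case reduces to chaining the two structural axioms $F(x)\in x$ and $v \in F(v)$, so no computation is required beyond tracking which copy of $F$ (the bundle map versus the section) is being applied at each step; the overloaded notation $F$ for both maps is the only genuine hazard, and I would be careful to state at the outset which domain each occurrence acts on. The mild obstacle worth flagging is consistency between the two descriptions: one should note that the simplex $x=F(v)$ appearing in the vertex picture is precisely the witness $T(x')$ for any $x'$ with $F(x')=v$, so the two statements are two views of the same map $F^2$ and the intersection witness in one picture is the common-simplex witness in the other. Once that compatibility is recorded, the corollary follows immediately from Theorem~\ref{PH}'s underlying setup, and I would close by remarking that this is the discrete shadow of local existence: the defining incidence condition is exactly the discrete analogue of a flow line being continuous, which is why the authors frame it as a Cauchy--Picard statement.
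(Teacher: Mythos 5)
Your proof is correct and follows the same route as the paper: compose the bundle map $F:G\to V$ with the section $F:V\to G$ and observe that $T=F^2$ is a map on $G$ (resp.\ on $V$). You go slightly further than the paper's one-line proof by explicitly exhibiting the witness $v=F(x)\in x\cap T(x)$ (resp.\ the common simplex $F(v)$), which is a welcome completion of the incidence check the paper leaves implicit.
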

\begin{proof}
The composition of two maps $F:G \to V, V \to G$ is a map $G \to G$. It also
defines a map $V \to V$. 
\end{proof} 

\paragraph{}
If $F$ comes from a digraph, then a natural choice for $F$ is to assign to a vertex $v$ 
an outgoing edge attached to $v$. The map $F: G \to V$ is determined by the direction
on the edge. The vector field can be seen as as section of
a ``unit sphere bundle". This model is crude as the map $F:G \to V$ leads to 
{\bf fibres} $F^{-1}(v)$ which are far from spheres in general
and even can be empty. However, the picture of having a fibre bundle is natural
if one sees $F: V \to G$ as a {\bf section of a fibre bundle}.
In order to have a rich ``tangent space" structure as a fibre, 
it is useful in applications to have a large dimensional simplicial complex so that
the fibres $F^{-1}(v)$ are rich allowing to model a differential equation well. This
picture is what one takes internal set theory seriously. 

\paragraph{}
In {\bf internal set theory}, compact sets can be exhausted by finite sets and continuous
maps are maps on finite sets and the notion of ``infinitesimal" is built in axiomatically into the system. 
This makes sense also in physics, as 
nature has given us a lot of evidence that very small distances are no more
resolvable experimentally. We stick to the continuum because it is a good idealization.
But when thinking about mathematics, already mathematicians like Gregory, Taylor or Euler thought 
in finite terms but chose the continuum as a good language to communicate and calculate. 

\paragraph{}
The theory of ordinary and partial differential equations shows that
dealing with finite models (in the sense of numerical approximations for example) can be 
messy and subtle. It is not the dealing with finite models which is difficult 
but the {\bf translation from the continuum to the discrete} which poses challenges. 
Numerical approximations should preserve various features like integrability or
symmetries which can be a challenge. Rotational symmetries for example
are broken in naive finite models and integrals are not preserved when doing numerical
computations. In internal set theory, all properties are present as the language is not
a {\bf restriction} to a finite model but a {\bf language extension} which allows to 
treat the continuum with finite sets. 

\paragraph{}
In order to get interesting dynamical systems, we need to impose more conditions. 
We want to avoid for example $F(v)=x$ and $F(x)=v$ as this produces a loop
which ends every orbit reaching $v$. We also would like to implement some kind of 
direction from $x \to F(x)=v \to y=F(v)=F^2(y)$. How to model this
appears in {\bf topological data analysis}, where a continuous system is modeled
by a collection of simplicial complexes. Given a compact metric space $(M,d)$, and 
two numbers $\delta>0$ and  $\epsilon>0$, one can look at a finite collection $V$ 
of points which are $\epsilon$ dense in $M$, then define
$G=\{ x \subset V \; | \;$, all points in $x$ have distance $\leq \delta \}$. This is 
a finite abstract simplicial complex. In the manifold case, the cohomology of $G$
is the same than the cohomology of $M$ if $\epsilon$ is small enough with respect to
$\delta$ and the later small enough to capture all features of $M$. 
Persistent properties remain eventually are independent of $\epsilon$ and $\delta$.
In that frame work, the simplicial complexes has of much higher dimension 
than the manifold itself. But the complex is homotopic to a nice triangulation of $M$. 
The framework in topological data analysis is called persistent homology 
(see i.g. \cite{Dlotko2014}).

\paragraph{}
A special case of a directed simplicial complex is a {\bf bi-directed simplicial complex}. 
This is a finite abstract simplicial complex in which two maps $F^+,F^-: G \to V$ are given
with $F^+(x) \in x$ and $F^-(x) \in x$. Think of $F^-(x)$ as the beginning of the simplex and
$F^+(x)$ as the end. Equilibrium points are situations where $F^+(x)=F^-(x)$. These are points
where various orbits can merge.  Every zero dimensional
simplex is an equilibrium point now because $F^+(\{v\}) = F^-(\{v\})=v$ is forced. 
For a directed graph, a natural choice is to define 
$F^+( (a,b) ) = b$ and $F^-( (a,b) ) =a$.

\paragraph{}
If an order is given by a locally injective function $g$ on the vertex set. 
Now, $F^+(x)$ is the largest element in $x$ and $F^-(x)$ is the smallest. The flow produced
by this dynamics is the {\bf gradient flow}. The reverse operation is the gradient flow for 
$-g$. A bidirected simplicial complex defines a vector field as defined above as 
a section of the bundle. The map $T$ produces a {\bf permutation} on the 
{\bf $\omega$ limit sets} $\Omega^+(x) = \bigcap_k \{ T^k(x) \; | \; k >0 \}$ and 
$\Omega^-(x)= \bigcap T^{-k}(G) = \bigcap_k \{ T^k(x) \; | \; k<0 \}$. 
The gradient flow illustrates that these sets are in general different.

\paragraph{}
Given a bi-directed simplicial complex $G$,
the map $F^+$ defines an index $i^+$ and the map $F^-$ defines an index $i^-$. 
One can now look at the {\bf symmetric index} $[i^+(x) + i^-(x)]/2$ which again
satisfies the Poincar\'e-Hopf theorem. In nice topological situations where $M$ 
is a smooth manifold and $F$ a smooth vector field with finitely many 
hyperbolic equilibrium points, the symmetric index agrees with both $i^+$ and
$i^-$ as the sets $S^-(x)$ of incoming simplices or $S^+(x)$ of outgoing simplices
are spheres. In two dimensions for example, we have at
a source or sink that one of the sets $S^{\pm}(x)$ is a circle and the other empty (a $(-1)$-
dimensional sphere), leading to index $i^{\pm}(x)=1-\chi(S^{\pm}(x)) = 1$. At a hyperbolic point with one
dimensional stable and one dimensional unstable manifold, we have $S^{\pm}(x)$ both being zero dimensional
spheres ($2$ isolated points) of $\chi(S^{\pm}(x))=2$ so that $i^{\pm}(x)=-1$. 

\section{Relation with the continuum}

\paragraph{}
Can one relate a differential equation $x'=F(x)$ on a manifold $M$ with a discrete vector
field? The answer is ``yes". But unlike doing the obvious and triangulate
a manifold and look at a discrete version, a better model of $M$ uses a much higher dimensional 
simplicial complex. This is possible in very general terms by tapping into the fundamental 
axiom system of mathematics like ZFC. In 1977, Ed Nelson extended the basic axiom system of mathematics
with three more axioms `internalization' (I), ` standardisation' (S) and `transfer' (T) to 
get a consistent extension ZFC+IST of ZFC in which one has more language and where compact 
topological spaces like a compact manifold are modeled by finite sets $V$. 
See for example \cite{Nelson77,Robert,Nelson,LawlerInternalSetTheory,NelsonSimplicity}). 
The flow produces a permutation on $V \subset M$ but a set $V$ alone without topological reference 
is a rather poor model, even if $M$ is given as a triangulation. Note that also the manifold 
structure is not lost in such a frame work as the set $\{ y \in M \; | \; r/2 \leq |x-y| \leq 2r \}$ 
is homotopic to a $d-1$ sphere if $r$ and $\epsilon$ are infinitesimal. This model of a sphere allows
an action of the rotation group. The genius of Nelson's approach is that it does not require
our known mathematics but resembles what a numerical computation in a finite frame work like a 
computer does. 

\paragraph{}
In order to model an ODE $x'=F(x)$ on a compact manifold $M$ using finite sets, pick 
a finite set $V$ in $M$ such that for every $w \in M$, there is $v \in V$ that is infinitesimally 
close to $w$ (this means that the distance $|v-w|$ is smaller than any ``standard" real number, 
where the attribute ``standard" is defined precisely by the axioms of IST). 
Let $X$ be the simplicial complex given by the set of sets 
with the property that for every $x \in X$, there is $v \in V$ such that every $w \in x$ is infinitesimally 
close to $v$. The set $\{ \overline{x} \; | \; x \in G \}$ is a set of compact subsets
of $M$ whose closure is compact the Hausdorff topology of subsets of $M$ and again by IST can be
represented by a finite set $Y$ of finite sets. Now, $G= \{ x \cap V | x \in Y \}$ is a finite
abstract simplicial complex. 

\paragraph{}
As a computer scientist we can interpret $x \in G$ as a set of points in $M$ 
which are indistinguishable in a given floating point arithmetic (or a given accuracy threshold) 
and $v = F(x)$ as a particular choice of the equivalence class of points which the computer architecture 
gives back to the user. The vector field computation using Euler steps $v \to T(v)=v+dt F(v)$ produces 
then an other equivalence class
of real vectors close to $v$ and the map $T: V \to V$ models the flow of the ODE $x'=F(x)$ on $M$ in the sense
that for any standard $T$, we are after $T/dt$ steps infinitesimally close to the real orbit $x(t)$ given
by the Cauchy-Picard existence theorem for ordinary differential equations.  See \cite{Lanford1986}
for more about the connection. 

\paragraph{}
This simplex model is a more realistic model than a permutation of a finite set of point in $M$ as the
simplicial complex provides an way to talk about objects in a tangent bundle.  
It also has the feature that the Euler characteristic of $M$ is equal to the Euler characteristic of $G$, 
even so the dimension of $G$ modeling the situation can be huge. The classical Poincar\'e-Hopf index of an 
equilibrium point of the flow in an open ball is the same than the sum of the Poincar\'e-Hopf indices of the
vector field on the directed complex. 

\begin{figure}[!htpb]
\scalebox{0.4}{\includegraphics{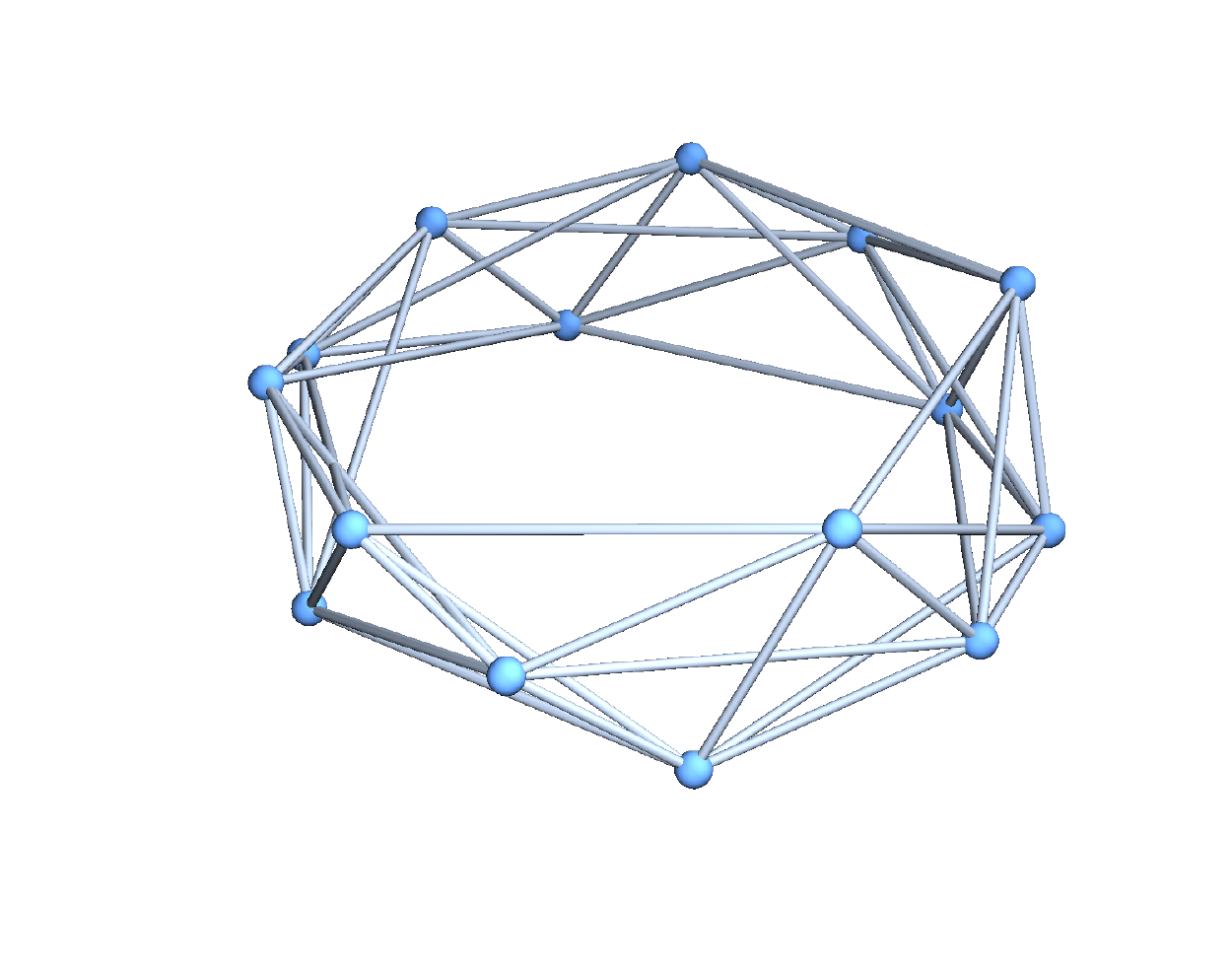}}
\scalebox{0.4}{\includegraphics{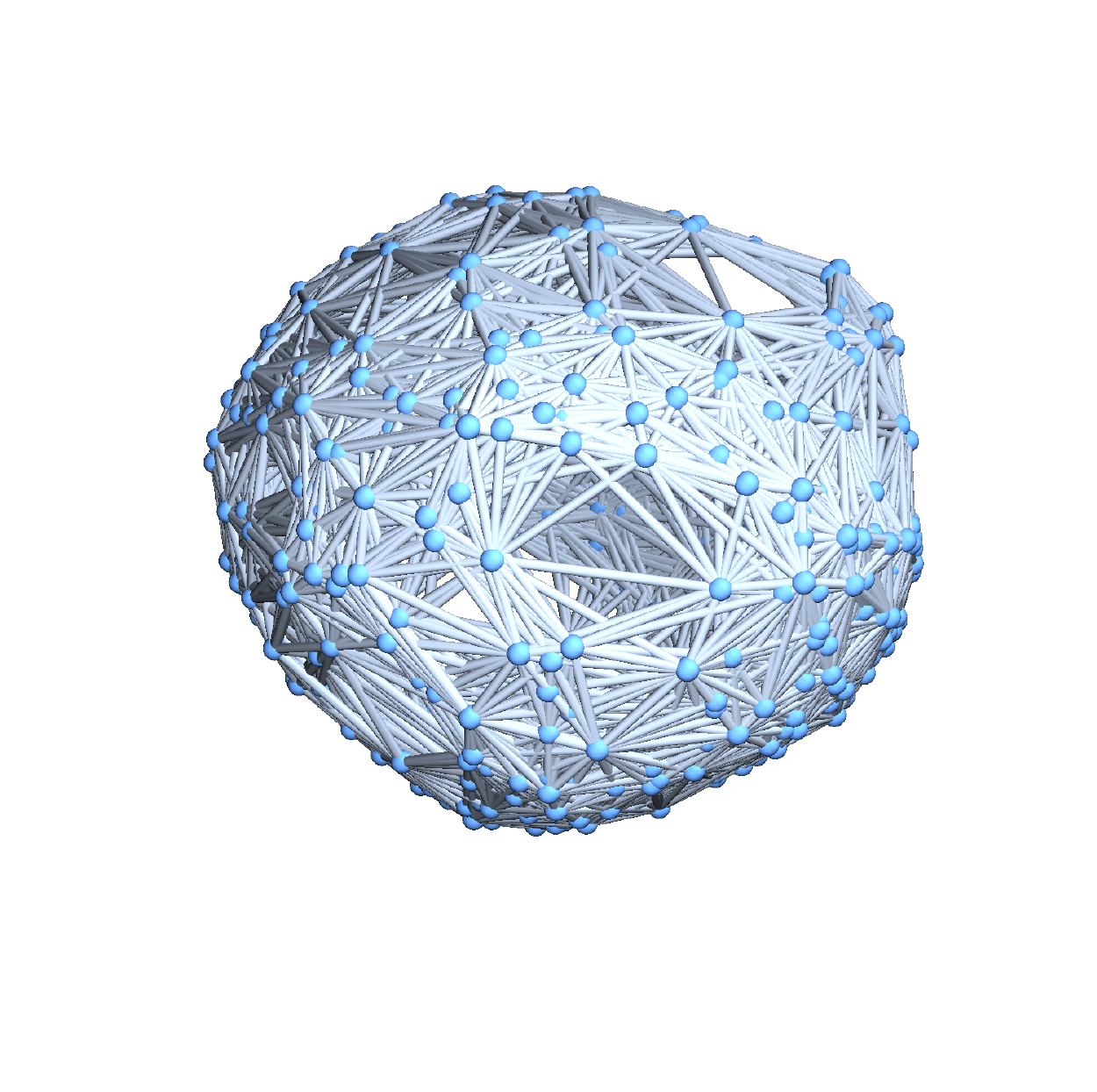}}
\label{triangle}
\caption{
A non-standard model of a $d$-manifold is obtained by picking a large set $V$ of points 
and an $\delta>0$ which models machine precision and where $|V|$ is much larger than $1/\delta^d$.
Define the graph $\Gamma=(V,E)$, where $(a,b)$ is in $E$ if $|b-a| < \delta$. The Whitney complex $G$
of this graph has a large dimension but a geometric realization is homotopic to $M$ and
the Euler characteristic of $G$ is the same than the Euler characteristic of $M$.  We see in this
figure a model for a circle and for a sphere (where 500 random points were chosen and 
connected if Euclidean distance is less than $1/2$).
}
\end{figure}

\section{Remarks} 

\paragraph{}
The following general remark from \cite{NelsonSimplicity} is worth citing verbatim. It is
remarkable as goes far beyond just modeling differential equations and somehow justifies an
simple approach to the question of modeling ordinary differential equations 
using combinatorial models. \\

\paragraph{}
{\it Much of mathematics is intrinsically complex, and there is delight to be
found in mastering complexity. But there can also be an extrinsic complexity
arising from unnecessarily complicated ways of expressing intuitive mathematical
ideas. Heretofore nonstandard analysis has been used primarily to simplify
proofs of theorems. But it can also be used to simplify theories. There are
several reasons for doing this. First and foremost is the aesthetic impulse, to
create beauty. Second and very important is our obligation to the larger scientific
community, to make our theories more accessible to those who need to use them.
}

\paragraph{}
The motivations for the above set-up comes from various places: first of all
from discrete Morse theory \cite{Forman1999,forman98,FormanVectorfields} which 
stresses the importance of using simplicial complexes for defining vector fields (but defines
combinatorial vector fields quite differently), simplicial sets 
\cite{RomeroSergeraert} (which also is a very different approach) as well as digital
spaces \cite{HermanDigitalSpaces,I94a,Evako2013} which provides valuable frame work. 
There is also motivation from the study of discrete approximations to chaotic 
maps \cite{Ran74,ZhVi88,lanford98}, previous theorems of Poincar\'e-Hopf type 
\cite{poincarehopf,knillcalculus,KnillBaltimore,AmazingWorld}, as well as internal set theory 
\cite{Nelson77,Robert,Nelson}. We might comment on the later
more elsewhere. Let us illustrate this with a tale of chaos theory. 
But not with a differential equation but with a map.

\paragraph{}
The {\bf Ulam map} $T(x)=4x(1-x)$ is a member in the {\bf logistic family} on the interval $[0,1]$. 
It is topologically conjugated to the tent map and measure theoretically conjugated to the map $x \to 2x$ on 
the circle $\mathbb{T}^1$ and so {\bf Bernoulli}, allowing to produce IID random variables. 
On any present computer implementation, the orbits of $T(x)=4x(1-x)$ 
and $S(x)=4x-4x^2$ disagree: $S^{60}(0.4) \neq T^{60}(0.4)$. 
Mathematica for example which computes with 17 digits machine precision,
\begin{center}${\rm T[x\_]:=4x(1-x);S[x\_]:=4x-4x^2; \{NestList[T,0.4,60],NestList[S,0.4,60]\}}$ \end{center}
we compute $S^{60}(0.4)=0.715963$ and $T^{60}(0.4)=0.99515$. 
More  dramatically, while $T^4(0.4)=S^4(0.4)$, the computer algebra system reports that $T^5(0.4) \neq S^5(0.4)$. Already
computing the polynomial $T^5$ of degree $32$ produces different numbers because the integer coefficients of $T^5$ have
more than 17 digits, while $T^4$ still has less than 17 digits so that the computer can still with it faithfully. 
With a Lyapunov exponent of $\log(2)$, the error of $T$  or $S$ gets doubled in each step so that
$2^{60} \times \epsilon$ is larger than $10$ for the given machine precision $\epsilon=10^{-17}$ and because
$4x-4x^2$ is only in the same $10^{-17}$ neighborhood of $4x(1-x)$, we get different numbers after 60 iterations. 
Floating point arithmetic does not honor the distributivity law. 

\paragraph{}
The simplicial complex $G$ in this case is any set of ``machine numbers" in $[0,1]$ with the property that their mutual distance
is smaller than $10^{-17}$. We have given the example of a map, but the same can happen for differential equations
like the Lorenz system in $\mathbb{R}^3$, where techniques like finding horse shoes enable to prove that the system has invariant
measures on the Lorenz attractor which the dynamics is a Bernoulli system. Also here, 
integrating the differential equation is better modeled microscopically as a map $F$ from a finite set $V$ to a 
simplicial complex $x \in G$ on $V$, rather then selecting a choice $F(x) \in V$. Yes, this defines a deterministic map on $V$
but it also defines a deterministic map on $G$, where the transition $x \to y$ is possible if $x$ and $y$ intersect.
Choosing projections $G \to V$ by picking a point in the center is a good model what happens in a real computer.

\paragraph{}
When investigating definitions of vector fields on graphs, we also aim for a Lie algebra structure.
We have defined in \cite{CartanMagic} an algebraic notion of vector field $X$ as a rule $i_X: \Lambda^{p+1} \to \Lambda^p$ 
balancing the exterior derivative $d: \Lambda^p \to \Lambda^{p+1}$ on discrete forms producing a Lie derivative
$L_X = i_X d + d i_X= D_X^2$ for which the wave equation $u_{tt} = L_X u$ is solved by a d'Alembert
type Schr\"odinger solution $e^{i D_X t}$ which classically is a Taylor theorem for transport. The point of the 
magic formula of Cartan is that it produces a transport flow on $p$ forms. If $i_X$ is replaced by $d^*$
we get the wave equation on $p$-forms. In spirit this is is similar as the flow of $L_X$ produces a dynamics using
neighboring spaces of simplicial complexes. This is also the case for the Laplacian 
$L = d^* d + d d^*$, where now the deterministic $i_X$ is replaced by a diffusion process defined by $d^*$.
So, also in this model, there is a deterministic flow version $L_X$ which classically is given by 
a vector field $X$ or a diffusive flow version $L$, in which case in some sense, we average over all possible
vector fields. The dynamics approach via discrete Lie derivatives is different from the combinatorial one considered
here and there is no Poincar\'e-Hopf theorem yet in the Lie derivative case. 

\paragraph{}
Poincar\'e-Hopf was generalized \cite{parametrizedpoincarehopf} to a function identity 
$$   f_G(t) = 1+t \sum_{x \in V} f_{S_g(x)}(t) $$ 
for the {\bf generating function}
$f_G(t)=1+f_0 t + \dots + f_{d} t^{d+1}$ encoding the
{\bf $f$-vector} of $G$, where $f_k$ is the number of $k$ dimensional sets in $G$. 
Averaging the index over some probability space of functions
gives then curvature $K(x)$ satisfying Gauss-Bonnet $\chi(G) = \sum_x K(x)$. 
\cite{indexexpectation,indexformula,eveneuler}.
In the case of a uniform measure, the Gauss-Bonnet formula generalizes to the functional
formula $f_G(t) = 1+\sum_{x} F_{S(x)}(t)$, where $F_G(t)$ is the anti-derivative of
$f_G$ \cite{cherngaussbonnet,dehnsommervillegaussbonnet}.

\begin{figure}[!htpb]
\scalebox{0.2}{\includegraphics{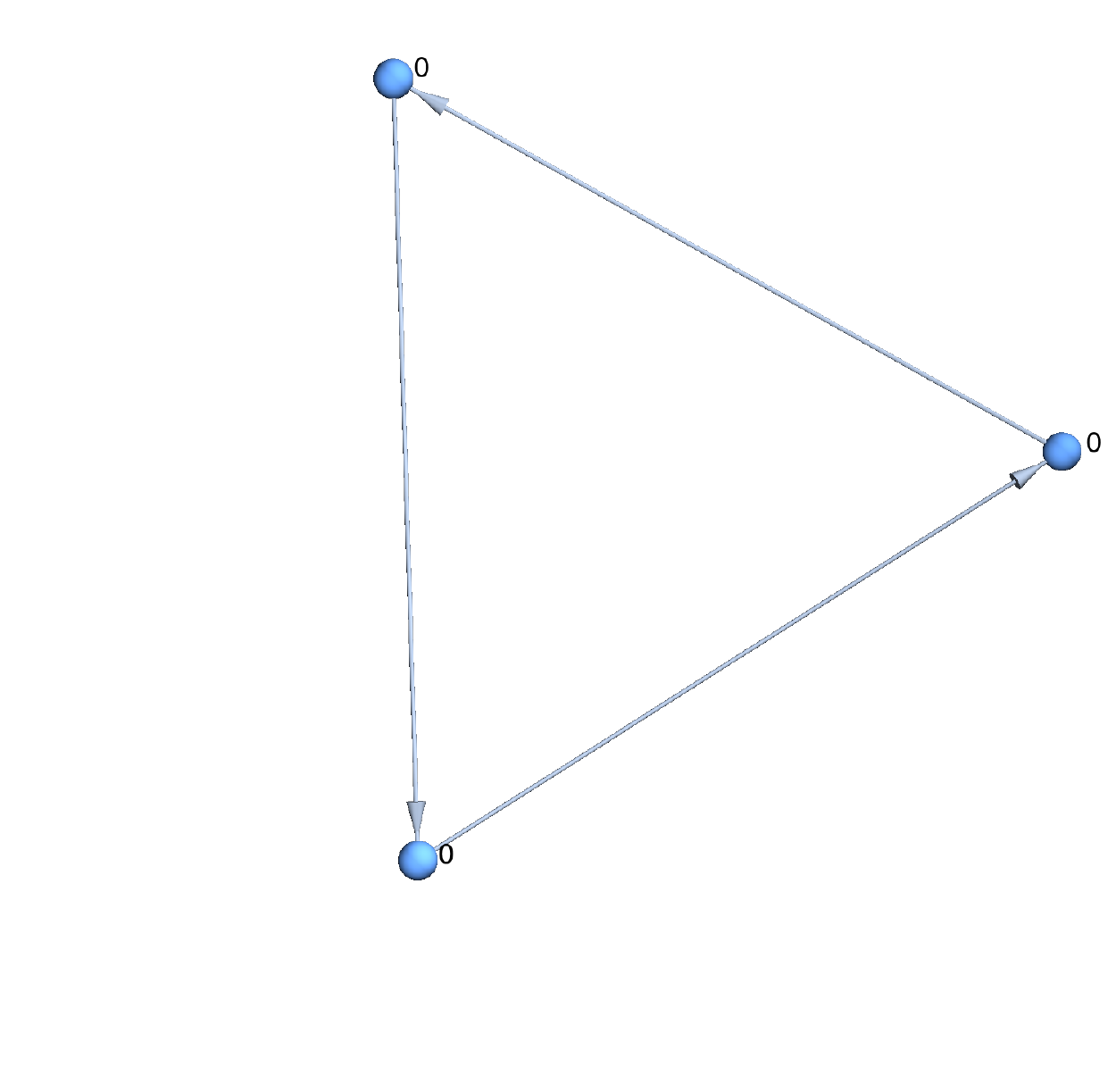}}
\scalebox{0.2}{\includegraphics{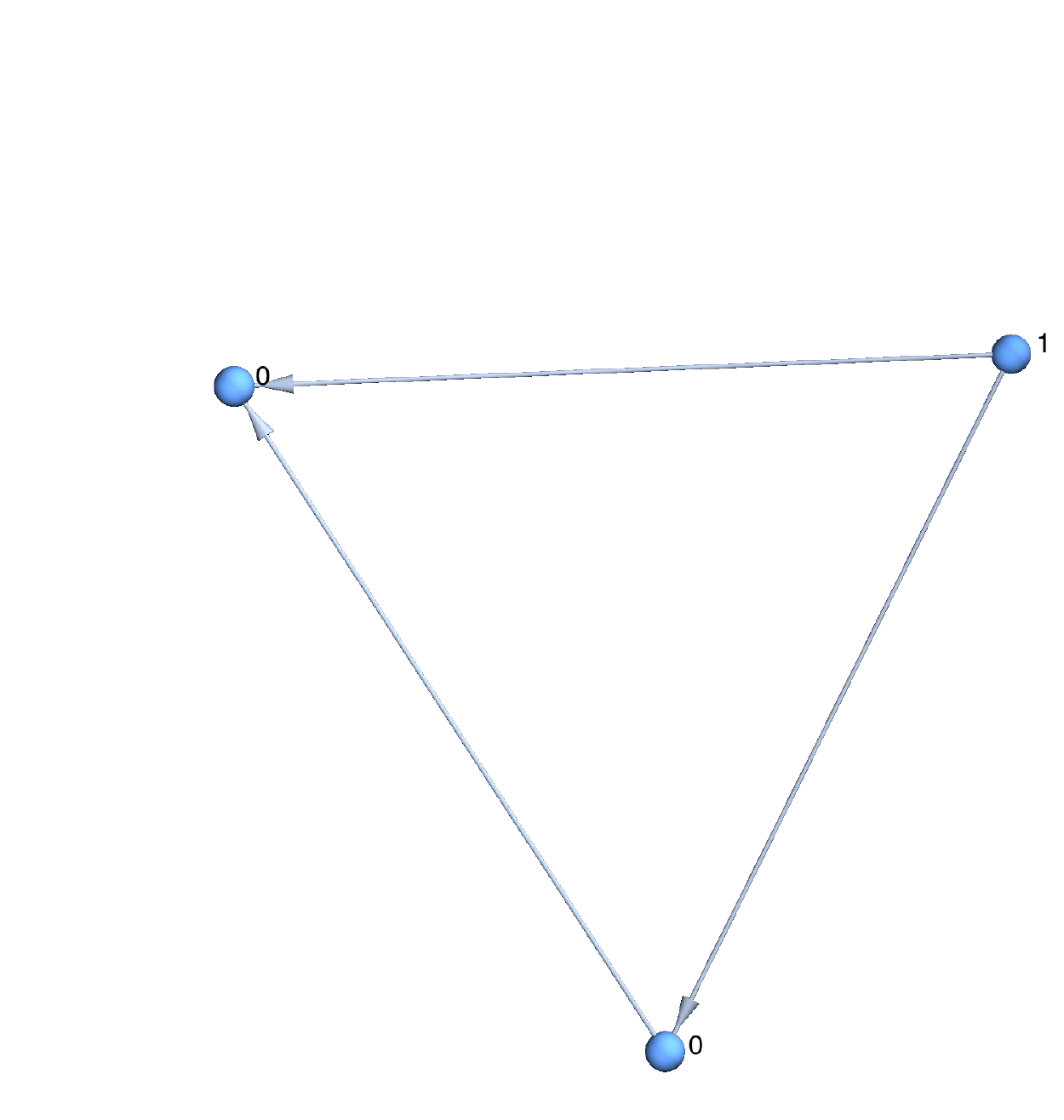}}
\label{triangle}
\caption{
A cyclic triangle has zero integer Poincar\'e-Hopf index at every vertex as it has
to add up to $1$ and symmetry requires the index to be the same on each vertex.
If the vector field is acyclic, it comes from a gradient field, then an index function exists.
It is $1$ at the minimum and $0$ else.
}
\end{figure}

\section{Irrotational digraphs}

\paragraph{}
This note started with the quest to generalize
Poincar\'e-Hopf theorem \cite{Poincare1885,HopfVectorfields,Spivak1999} from 
gradient fields to more general vector fields. This problem has already been posed
in \cite{poincarehopf} and it was pointed out that using a digraph alone can not work
because a circular triangle produces an index inside. It turns out that the existence of
such triangles is the only obstacle. We call digraphs without circular triangles 
{\bf irrotational}. Such graphs still can have circular loops. We just don't want to have
them microscopically to happen on triangles. 

\paragraph{}
Let $\Gamma=(V,E)$ be a finite simple graph and $G$ the Whitney complex consisting of all
finite simple subgraphs. If the edges of $\Gamma$ are oriented, we have a directed graph which 
is also called a digraph. 
Let $F$ be a $1$-form, a function from edges to $\mathbb{R}$ and the curl $dF=0$ is zero 
on all triangles, we can define 
$$   i_F(x) = 1-\chi(S_F(x)) \; , $$
where $S_F(x)=\{ y \in S(x)$, for which $F$ points from $x$ to $y \}$. As we only need the 
direction of $F$ and not the magnitudeof the field, all what matters is a 
{\bf digraph structure} on $G$. 

\paragraph{}
Let us call a finite simple digraph {\bf irrotational} if there are no closed loops of 
length $3$ in the digraph. 
This does not mean that the graph is acyclic. Indeed, any one-dimensional graph (a graph with 
no triangles) and especially the cyclic graphs $C_n$ with $n \geq 4$ are irrational  because there
are no triangles to begin with. For an irrational graph we can then define a {\bf positive function} 
$F: E \to \mathbb{R}$ on the oriented edges of the graph 
such that the curl satisfies $dF=0$. This justifies the name irrotational for the directed graph. 

\paragraph{}
Even if $F$ has zero curl, the field $F$ does not necessarily come from a scalar field $g$, as the 
first cohomology group $H^1(G)$ is not assumed to be trivial. Indeed $H^1(F)$ is the quotient of the
vector space $\{ F \;  dF=0 \}$ divided by the vector space $\{ F=dg \}$ of gradient fields. More generally, the 
maps $G \to H^k(G)$ are functors from finite simple graphs to finite dimensional vector spaces and 
$b_k(G) = {\rm dim}(H^k(G))$ are the {\bf Betti numbers} of $G$ which satisfy $\sum_k (-1)^k b_k = \chi(G)$. 
The Euler characteristic of a digraph is the Euler characteristic of the finite simple graph in 
which the direction structure on the edges is forgotten. 

\paragraph{}
But Poincar\'e-Hopf still works in the irrotational case. 
The Euler characteristic of when the digraph structure is discarded. 

\begin{propo}[Poincar\'e-Hopf for irrotational digraph]
Let $G$ be a finite simple irrotational digraph, then 
the index $i(x) = 1-\chi(S^-(x))$ satisfies $\sum_x i(x) = \chi(G)$. 
\end{propo}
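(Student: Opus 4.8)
The plan is to reduce the Proposition to Theorem~\ref{PH} by showing that an irrotational digraph canonically defines a directed energized complex $(G,H,F)$ with $H=\omega$, and that the index defined there, $i_F(v)=\chi(F^{-1}(v))$, coincides with the geometric index $i(x)=1-\chi(S^-(x))$ claimed here. Once both identifications are in place, the equality $\sum_x i(x)=\chi(G)$ is immediate from Theorem~\ref{PH}, since that theorem is exactly the statement $\sum_v i_F(v)=\chi(G)$.

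\textbf{First} I would construct the bundle map $F\colon G\to V$. The key structural fact to establish is that on an irrotational digraph, every simplex $x\in G$ (a complete subgraph, by the Whitney condition) inherits a total order from the edge orientations: since $x$ is a clique and no cyclic triangle occurs inside it, the induced tournament on $x$ is transitive, hence a total order. I would then set $F(x)$ to be the maximal (source) vertex of $x$ under this order, and check $F(x)\in x$, so that $(G,F)$ is a directed complex in the sense of the opening definition. The heart of this step is the claim \emph{transitive tournament $\Leftrightarrow$ no cyclic triangle}; I expect this to be the main obstacle, because while the forward direction is trivial, the converse requires ruling out longer cyclic patterns on a clique, and one must verify that absence of $3$-cycles on every triple already forces global transitivity on the whole clique (a standard but nontrivial tournament fact).

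\textbf{Next} I would verify the index identity $\chi(F^{-1}(v))=1-\chi(S^-(v))$, where $S^-(v)$ is the subgraph of the unit sphere $S(v)$ spanned by in-neighbors $w$ (those with $w\to v$). The simplices $x$ with $F(x)=v$ are precisely those cliques containing $v$ in which $v$ is the maximal element, equivalently the cliques $\{v\}\cup\sigma$ where $\sigma$ is a (possibly empty) clique in $S^-(v)$. Summing $\omega(x)=(-1)^{\dim x}$ over these simplices and factoring out the extra vertex $v$ yields $\sum_{\sigma}(-1)^{|\sigma|}=1-\sum_{\sigma\neq\emptyset}(-1)^{\dim\sigma}=1-\chi(S^-(v))$, which is the desired relation. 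This is a routine reindexing once the description of $F^{-1}(v)$ is pinned down.

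\textbf{Finally}, with $i(v)=i_F(v)$ for every $v$ and $H=\omega$ so that the total energy is $\chi(G)$, the conclusion $\sum_x i(x)=\chi(G)$ follows directly by invoking Theorem~\ref{PH}. I would close by remarking that the hypothesis $dF=0$ (zero curl) from the preceding paragraphs is what guarantees irrotationality in the metric formulation, so the combinatorial condition ``no closed $3$-loops'' and the analytic condition coincide, and no extra work beyond the transitivity lemma is needed.
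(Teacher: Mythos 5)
Your proof is correct, but it follows a different route from the paper's own argument. The paper proves the proposition by a local cohomological reduction: for each vertex $v$ it places values on the oriented edges of the unit ball $B(v)$ so that the curl vanishes on every triangle (possible by irrotationality), notes that $B(v)$ is contractible so $H^1(B(v))=0$, hence the $1$-form is locally $dg$ for a potential $g$, and identifies the digraph index at $v$ with the gradient index of $g$ from \cite{poincarehopf}. You instead reduce directly to Theorem~\ref{PH}: the no-cyclic-triangle hypothesis makes the induced tournament on every clique transitive, hence a total order with a unique maximal vertex, which defines the bundle map $F:G\to V$; the reindexing $\chi(F^{-1}(v))=\sum_{\sigma\subset S^-(v)}(-1)^{|\sigma|}=1-\chi(S^-(v))$ then converts the energy-transport identity into the stated formula. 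Your route is the one the paper itself foreshadows in the Section~1 paragraph on digraphs without cyclic triangles, and it is arguably more complete: the paper's proof as written establishes only the local identification $i(v)=i_g(v)$ for a locally defined $g$ and leaves the passage to the global sum implicit, whereas your argument is global from the start. Two small remarks. First, the tournament fact you flag as the main obstacle is easier than you fear: transitivity is a condition on triples, so ``no cyclic triangle'' gives it immediately (if $a\to b\to c$ and $c\to a$ we would have a $3$-cycle), and a finite transitive tournament is a total order with a unique maximum; no separate analysis of longer cycles is needed. Second, be aware that the paper is not consistent about whether $F$ picks the source or the sink of an edge and whether $S^-$ denotes in- or out-neighbors; your computation is insensitive to this choice by the symmetry $g\mapsto -g$, but you should fix one convention and state it.
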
 

\begin{proof} 
Let $v$ be a vertex in $G$. The unit ball $B(v)$ is contractible and so has
trivial cohomology. Since the digraph structure is irrotational, we can place function values
$F(e)$ on the oriented edges of $G$ such that the curl of $F$ is zero in 
every triangle of $B(v)$. The field $F$ is therefore a gradient field. Since $H^1(B(v))=0$,
there is a function $g$ such that the gradient of $g$ is $dg=F$. Now, the index at $v$ is the
same than the usual index of $g$.
\end{proof} 

\paragraph{}
Example: The cyclic digraph graph $G=( \{ 1,2,3,4 \}, \{ (12),(23),(34),(41) \})$ 
has $i(x)=0$ for all $x$. In contrast, for
a gradient field $F=dF$ we always had at least two critical points: the local minima has 
index $1$ while the local maxima has index $-1$. In general, there are vector fields 
on any $d$ dimensional flat torus which have no critical point at all.

\paragraph{}
The example of a triangle which has Euler characteristic $1$ shows that one 
can not get a local integer-valued index function on the vertices $x$ alone if
one insists the index function to be deterministic and depend only on the 
structure of $F$ of the unit ball of $x$. 

\begin{figure}[!htpb]
\scalebox{0.5}{\includegraphics{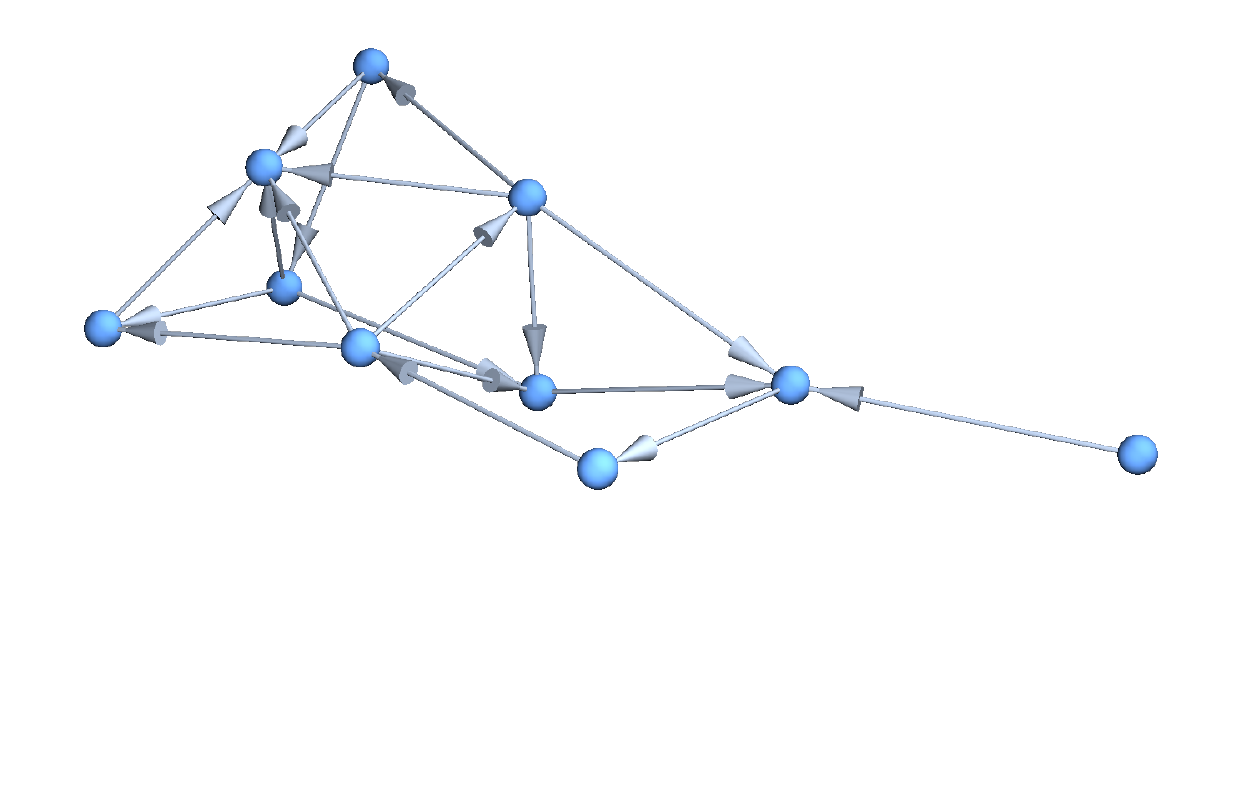}}
\scalebox{0.5}{\includegraphics{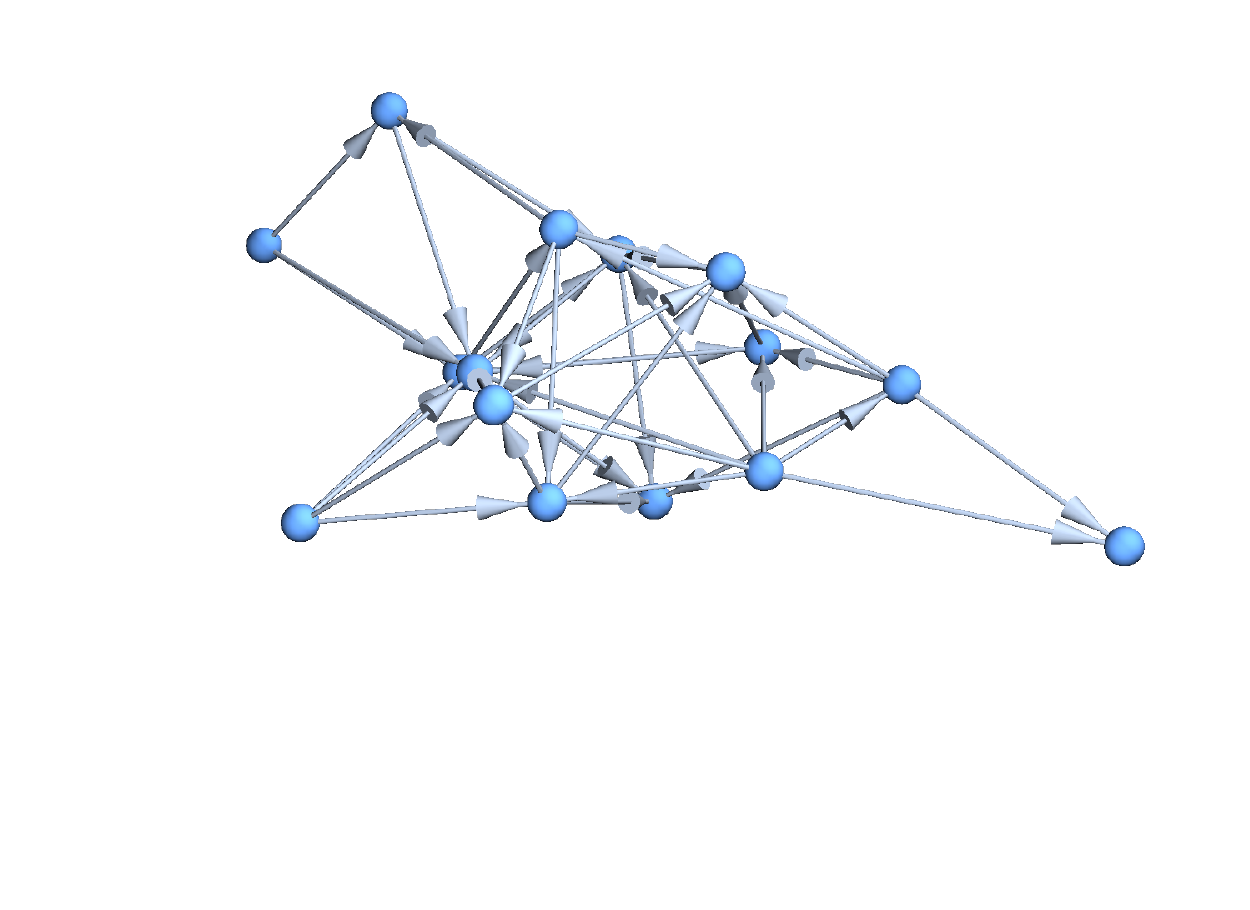}}
\label{triangle}
\caption{
Example of acyclic graphs, digraphs in which no triangle is cyclic. As we can tune
the values of a one form implementing the directions so that the curl is zero in each
triangle, this 1-form is actually given locally by a gradient field defined by a potential
function $g$.  Not globally because of cohomology but the index is then the index of 
the locally injective function $g$. 
}
\end{figure}

\paragraph{}
Example: Given a graph $G$ equipped with a direction field, we can look at the Barycentric
refinement $G_1$ of $G$. The vertices are the simplices of $G$ and two are connected
if one is contained in the other. If $e=a \to b$ is refined, we define a direction on
$e_1: a \to b \to c$, keeping the flow there in the same direction. For any 
other new connection $x \to y$, keep flowing to the lower dimensional part. This 
vector field on $G_1$ has no cycles so that the above result holds for the
{\bf Barycentric refined field}. 

\begin{figure}[!htpb]
\scalebox{0.5}{\includegraphics{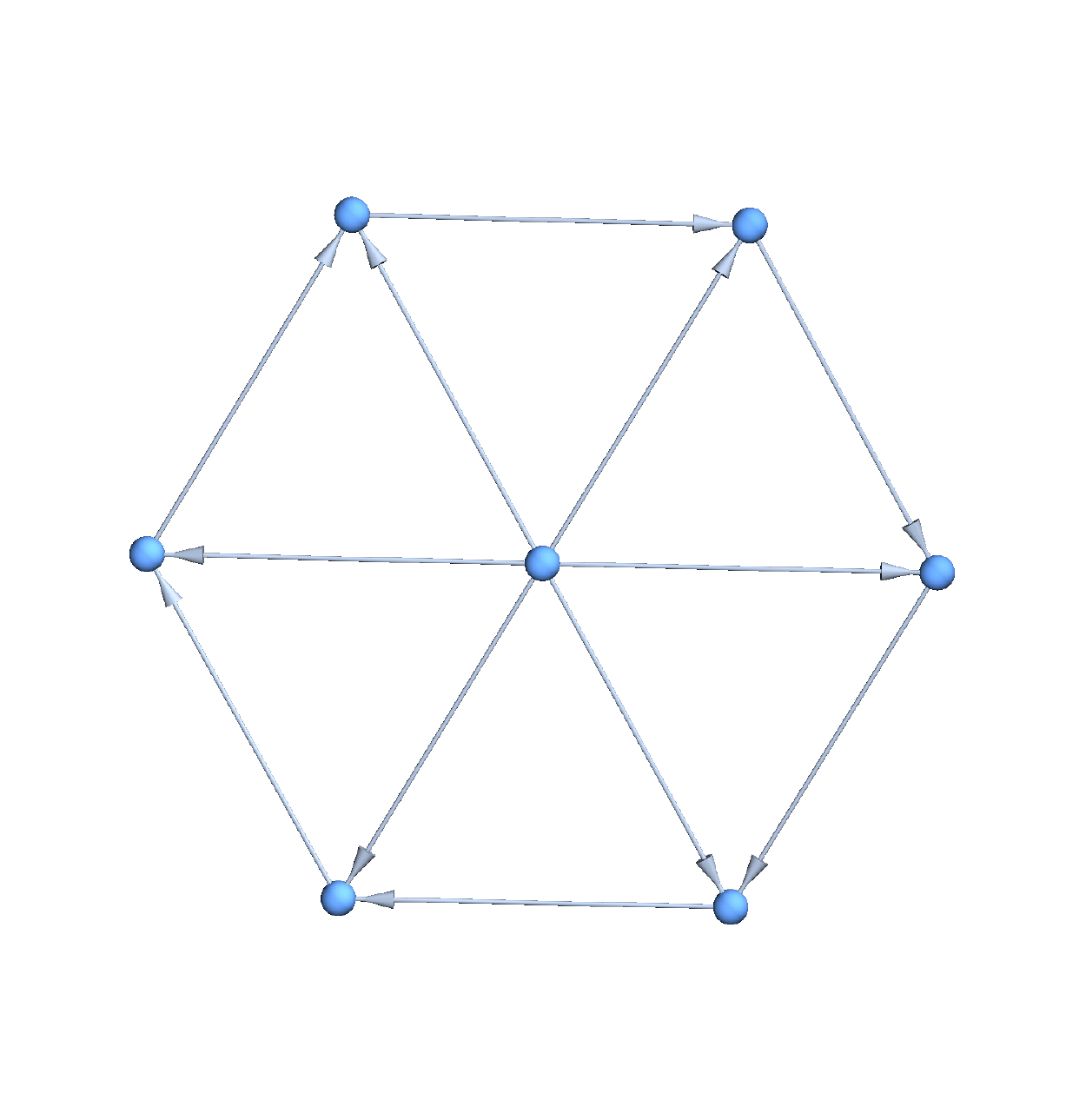}}
\label{triangle}
\caption{
The Barycentric refinement of the circular triangle splits each edge
into two, leaves the direction there and connects the newly added simplex
to all the vertices. This Barycentric refined field is always acyclic so that
the above proposition applies. 
}
\end{figure}

\begin{lemma}
The Barycentric refined field $F$ is acyclic. 
\end{lemma}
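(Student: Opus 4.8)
The plan is to prove the statement in the sense the paper attaches to ``acyclic'' in its Poincar\'e--Hopf Proposition and in the figures, namely \emph{irrotational}: no triangle of the refined digraph is cyclically oriented. This is the reading one wants, since the refined cyclic triangle still contains the directed hexagon $\{1\} \to \{12\} \to \{2\} \to \{23\} \to \{3\} \to \{13\} \to \{1\}$, so literal acyclicity (no directed loop of any length) is not what is being asserted; and it is also exactly the hypothesis the preceding Proposition needs. Under this reading it suffices to take a single, arbitrary triangle of $G_1$ and show it cannot be cyclic.

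First I would describe the triangles of $G_1$. In the Barycentric refinement the vertices of $G_1$ are the simplices of $G$, and two are joined precisely when one is contained in the other; hence a $3$-clique $\{x,y,z\}$ of $G_1$ forces the three sets to be pairwise comparable, i.e.\ to form a flag $x \subsetneq y \subsetneq z$ in $G$. Consequently every triangle of $G_1$ has a \emph{unique} vertex of maximal dimension, its largest set $z$, and since a strictly increasing chain of three simplices has dimensions at least $0,1,2$, we always have $\dim(z) \ge 2$. Next I would read off the two edges of this triangle meeting $z$. By construction the only edges of $G_1$ whose direction is \emph{not} the ``flow to the lower dimensional part'' are the refined original edges, which join an original vertex (dimension $0$) to an original edge (dimension $1$). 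Because $\dim(z) \ge 2$, neither $z$--$y$ nor $z$--$x$ is such an exceptional edge, so both point downward in dimension: $z \to y$ and $z \to x$. Thus $z$ is a source of $\{x,y,z\}$, of in-degree $0$ inside the triangle.

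Finally I would conclude: a cyclically oriented triangle has every vertex of in-degree and out-degree $1$ and hence no source, so the presence of the source $z$ shows $\{x,y,z\}$ is not cyclic. As the triangle was arbitrary, $G_1$ carries no cyclic triangle, the refined field is acyclic in the required sense, and the irrotational Poincar\'e--Hopf Proposition applies. The one place the argument could go wrong---and the point I would treat most carefully---is the exceptional ``keep the original flow'' rule on the refined edges: one must rule out that it secretly installs a cyclic triangle. It cannot, precisely because such an exceptional edge appears only as the \emph{lowest} edge $x$--$y$ (dimensions $0$ and $1$) of a flag whose top $z$ already has dimension $\ge 2$; once $z$ is forced to be a source, whichever way rule $1$ orients $x$--$y$ is immaterial to cyclicity.
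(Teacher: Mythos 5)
Your argument is correct, and it is worth noting that the paper states this lemma with no proof at all, so there is nothing to compare against: your write-up supplies the missing argument. The key observations are all sound: triangles of $G_1$ are exactly flags $x\subsetneq y\subsetneq z$, the top simplex $z$ of such a flag has dimension at least $2$, the only edges whose orientation can point upward in dimension are the two halves of a refined original edge (joining a dimension-$0$ to a dimension-$1$ simplex), hence both edges at $z$ point out of $z$ and the triangle has a source, so it cannot be cyclically oriented. You are also right to flag the terminological issue: the refined cyclic triangle still carries a directed hexagon around its boundary, so the lemma cannot mean ``no directed cycles'' in the sense the paper uses when it contrasts \emph{acyclic} with \emph{irrotational}; the reading ``no cyclic triangle'' is the only one consistent with the preceding Proposition and with the figure captions, and it is exactly what your proof establishes.
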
 

\begin{coro}
The Poincar\'e-Hopf index function defined by a field $F$
adds up to the Euler characteristic. 
\end{coro}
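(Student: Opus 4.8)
The plan is to read this off directly from the two results that immediately precede it. The field $F$ in the statement is the Barycentric refined field on $G_1$ constructed above, and the Lemma asserts that this field is acyclic. My first step would be to observe that ``acyclic'' is strictly stronger than ``irrotational'': a cyclic triangle is in particular a closed directed loop of length three, so a digraph carrying no directed cycle at all certainly carries no cyclic triangle. Hence $(G_1,F)$ is a finite simple irrotational digraph in exactly the sense required by the Proposition, and the index $i(x)=1-\chi(S^-(x))$ appearing there is precisely the index function defined by $F$.

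Once that identification is made, I would simply apply the Proposition (Poincar\'e-Hopf for irrotational digraphs) to $G_1$ equipped with $F$. It delivers
$$ \sum_{x} i(x) = \chi(G_1), \qquad i(x) = 1 - \chi(S^-(x)), $$
the sum running over the vertices of $G_1$, that is, over the simplices of the original complex $G$. The final step is to note that Barycentric refinement leaves the Euler characteristic unchanged, $\chi(G_1)=\chi(G)$, so the index sum equals the Euler characteristic of the original complex as well. If one prefers a completely self-contained route that does not even invoke the Proposition, acyclicity of $F$ yields a topological ordering of the vertices of $G_1$, i.e. a globally defined locally injective $g$ with $F=dg$, whence the gradient Poincar\'e-Hopf theorem of \cite{poincarehopf} applies directly.

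There is essentially no obstacle here: the genuine work was done in establishing the Lemma, after which the corollary is a one-line reduction. The only point that warrants a sentence of care is the implication that acyclic $\Rightarrow$ irrotational, which is immediate, together with the invariance $\chi(G_1)=\chi(G)$ under refinement. I therefore expect the argument to occupy two sentences, the first invoking the Lemma to secure the irrotational hypothesis and the second applying the Proposition to conclude.
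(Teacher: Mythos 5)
Your argument is correct and is exactly the route the paper intends: the paper states the corollary without a written proof, but the surrounding text (``This vector field on $G_1$ has no cycles so that the above result holds for the Barycentric refined field'') makes clear that the intended justification is precisely your chain Lemma $\Rightarrow$ acyclic $\Rightarrow$ irrotational $\Rightarrow$ Proposition applies to $G_1$. Your added remark that $\chi(G_1)=\chi(G)$ is a worthwhile explicit step the paper leaves implicit (and later supports with its lemma on local Barycentric refinements preserving Euler characteristic).
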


\begin{figure}[!htpb]
\scalebox{0.5}{\includegraphics{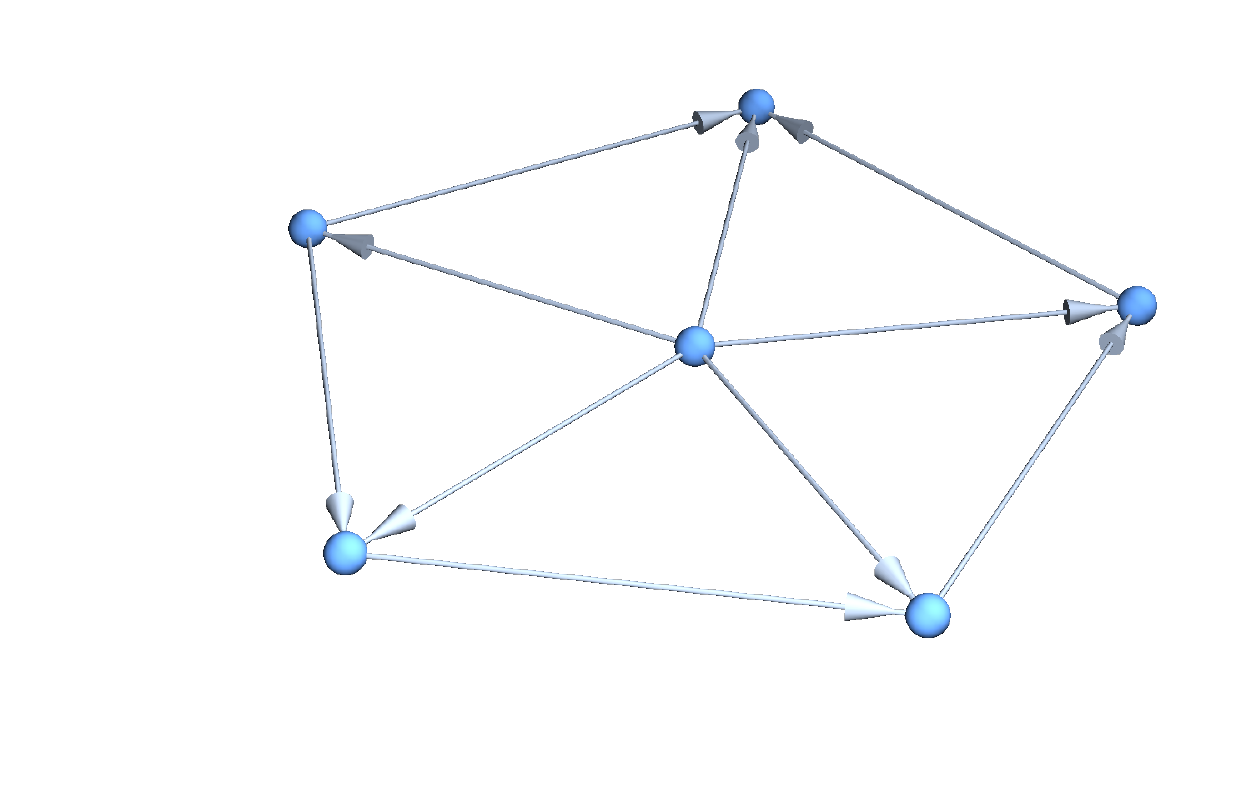}}
\scalebox{0.5}{\includegraphics{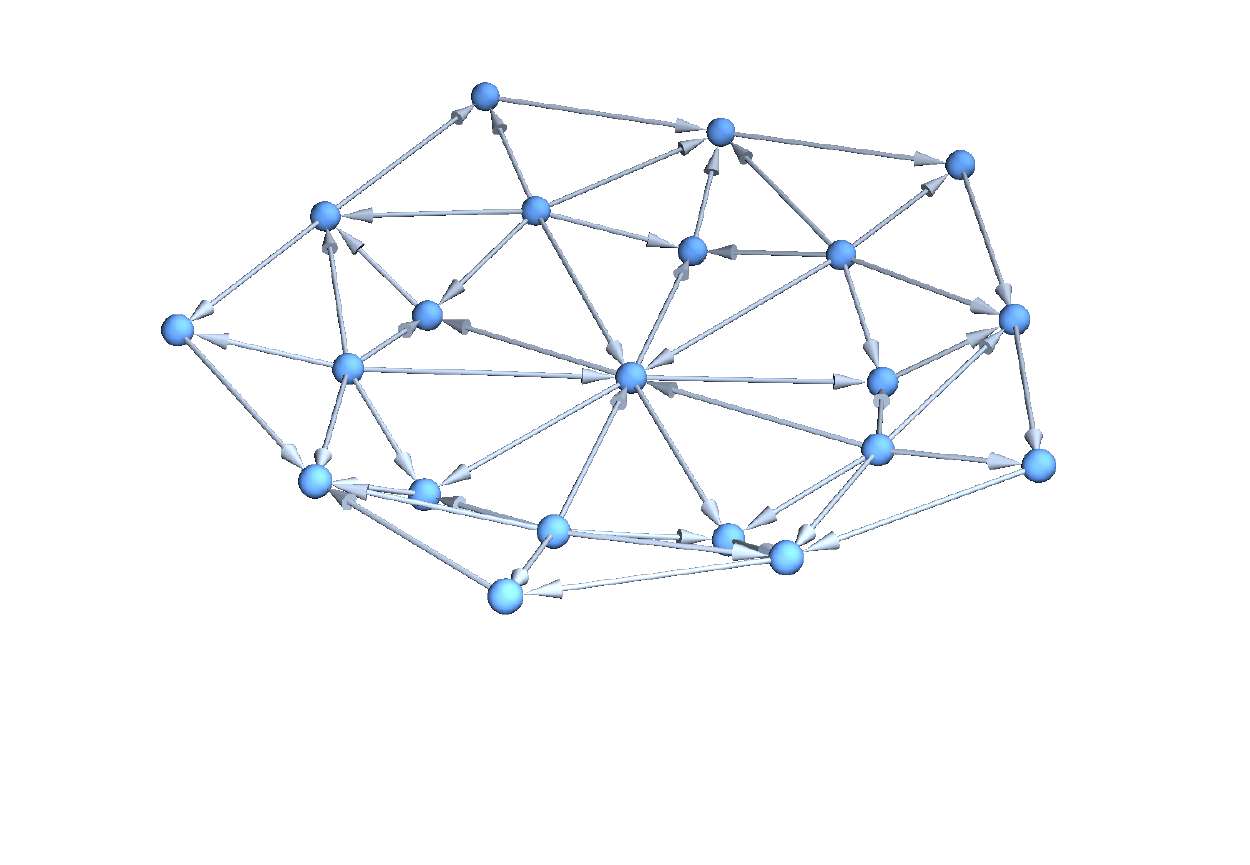}}
\label{triangle}
\caption{
A directed wheel graph and its Barycentric refinement. The later
is irrotational so that we can assign Poincar\;e-Hopf indices
to its vertices. 
}
\end{figure}

\paragraph{}
Given a simplex $x = (x_0,\cdots,x_k)$ in a graph, a {\bf local Barycentric refinement} at $x$ 
adds a new vertex $x$ and connects it to all points in $S(x) = S(x_0) \cap \cdots \cap S(x_k)$, 
where $S(x_k)$ are the unit spheres of $S_k$ as well as all points $x_k$.  
We use it here for triangle refinements when $k=2$. 

\paragraph{}
There are two ways to think about the refinement. We can stay in the category of graphs but this
changes the maximal dimension of the Whitney simplicial complex defined by $G$. If $x$ has
dimension larger than $1$, We can also 
interpret it as a refinement of simplicial complexes and keep so the dimension the same. 
For example, if $G$ is a single triangle, then refining it we can within graph theory see the result
as a 3-simplex with $f$-vector $(4,6,4,1)$. 
When getting rid of the original simplex, we have a complex with $3$ triangles and $6$ vertices
and have an $f$-vector $(3,6,4)$. 
For $k=1$, the later version gives an {\bf edge refinement} which when reversed is 
{\bf edge collapse}.  In general, we have:

\begin{lemma}
Any local Barycentric refinement $G \to G_1(x)$  at a 
simplex $x$ preserves Euler characteristic. 
\end{lemma}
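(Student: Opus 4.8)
The plan is to \emph{localize}: the refinement changes $G$ only near $x$, and the region it changes is a \emph{cone}, whose Euler characteristic is pinned to $1$ no matter what the surrounding complex looks like. Write $b$ for the vertex added at the simplex $x=(x_0,\dots,x_k)$. By definition $b$ is joined exactly to $x_0,\dots,x_k$ and to the common sphere $S(x)=S(x_0)\cap\cdots\cap S(x_k)$, so the subgraph induced on the neighbors of $b$ is the closed star $\mathrm{St}(x)$ of $x$, namely the full simplex on $\{x_0,\dots,x_k\}$ joined with the link complex spanned by $S(x)$. One checks that a clique containing $b$ is the same datum as a clique in $N(b)$, so the map $\sigma\mapsto\{b\}\cup\sigma$ identifies the new simplices with the faces of $\mathrm{St}(x)$ (the empty $\sigma$ giving $\{b\}$). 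The first step is therefore to establish the clean description $G_1=G\cup\bigl(b*\mathrm{St}(x)\bigr)$, the union of $G$ with the cone on the closed star, glued along the base $\mathrm{St}(x)\subset G$.

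Next I would record the single Euler-characteristic input this rests on: for any subcomplex $K$ and a fresh vertex $v$, the cone $v*K$ satisfies $\chi(v*K)=1$. Splitting the faces of $v*K$ into those avoiding $v$ and those containing $v$ gives $\chi(v*K)=\chi(K)+\bigl(1-\chi(K)\bigr)=1$, using $\sum_{\tau\in K}\omega(\{v\}\cup\tau)=-\chi(K)$. Applying this twice, $\mathrm{St}(x)$ is itself a cone with apex $x_0$, so $\chi(\mathrm{St}(x))=1$, and the added piece $b*\mathrm{St}(x)$ is a cone with apex $b$, so $\chi(b*\mathrm{St}(x))=1$.

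With these in hand the conclusion is immediate from additivity of Euler characteristic, $\chi(A\cup B)=\chi(A)+\chi(B)-\chi(A\cap B)$. Taking $A=G$, $B=b*\mathrm{St}(x)$ and noting $A\cap B=\mathrm{St}(x)$ (the faces of the cone not containing $b$ are exactly the already-present closed star), I obtain
\[
  \chi(G_1)=\chi(G)+\chi\bigl(b*\mathrm{St}(x)\bigr)-\chi\bigl(\mathrm{St}(x)\bigr)=\chi(G)+1-1=\chi(G).
\]
Equivalently, summing $\omega$ over the newly created simplices alone gives $1-\chi(\mathrm{St}(x))=0$, an $f$-vector check that reproduces the tetrahedron count $1-3+3-1=0$ when $G$ is a single triangle.

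For the dimension-preserving (simplicial) reading discussed above, where one additionally deletes the open star and cones $b$ only over the frame $B=\partial x * (\text{link of }x)$, the same template applies: write $G=R\cup\mathrm{St}(x)$ and $G_1=R\cup(b*B)$ with $R\cap\mathrm{St}(x)=R\cap(b*B)=B$, and use again that both $\mathrm{St}(x)$ and $b*B$ are cones of Euler characteristic $1$. I expect the only real friction to be the bookkeeping of these intersections---verifying that replacing $\mathrm{St}(x)$ by its subdivision leaves the complementary complex $R$ and the shared frame $B$ untouched---together with the degenerate cases $S(x)=\emptyset$ (a free simplex, with void link) and $x$ top-dimensional. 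In each of these the cone description still holds verbatim, so additivity closes the argument.
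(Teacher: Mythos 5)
Your proof is correct, and it is organized differently from the paper's. The paper argues by an explicit parity count: it splits into the cases $k$ even and $k$ odd, uses that the boundary sphere $\partial x$ of the simplex has Euler characteristic $0$ resp.\ $2$, and tallies how each face of $\partial x$, the top cell $(x,x_0,\dots,x_k)$ and the new vertex shift the balance of even- versus odd-dimensional simplices. You instead package all newly created simplices as a cone $b*\mathrm{St}(x)$ over the closed star, invoke twice the fact that a cone has Euler characteristic $1$, and finish by inclusion--exclusion (which is exact here, since $\chi$ is literally a sum of $\omega$ over simplices); no case split on the parity of $k$ is needed. Your route buys two things: it eliminates the bookkeeping of even/odd counts, and --- more substantively --- it accounts for \emph{all} new simplices, including those of the form $\{b\}\cup\tau$ where $\tau$ meets the common link $S(x)=S(x_0)\cap\cdots\cap S(x_k)$; the paper's count as written only enumerates faces of $x$ itself together with the top cell and the new vertex, so your argument is actually the more complete one for the refinement as the paper defines it (new vertex joined to the link as well as to $x_0,\dots,x_k$). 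Both proofs ultimately rest on the same identity, namely that the alternating sum over the added simplices equals $1-\chi(\text{cone base})=0$. Your closing remarks on the dimension-preserving variant and the degenerate cases ($S(x)=\emptyset$, $x$ top-dimensional) go beyond what the paper records and are handled correctly by the same cone argument.
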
 

\begin{proof}
If $k$ is even, then the boundary sphere complex $S(x)$ 
has Euler characteristic $0$, the same number of even and odd dimensional 
simplices. When adding a new vertex, every odd simplex in $S(x)$ adds an
even dimensional simplex in $G_1(x)$ and every even simplex in $S(x)$ adds
an odd dimensional simplex in $G_1(x)$. We also add a $(k+1)$-dimensional simplex
$(x,x_0,... ,x_k)$ and a zero dimensional one which cancel. 
If $k$ is odd, then the boundary sphere complex $S(x)$ has
Euler characteristic $2$, There are 2 more even dimensional simplices than
odd dimensional ones. This means we add $2$ more odd dimensional simplices
to $G_1(x)$ than odd dimensional ones. But we also add an even dimensional 
$(k+1)$ simplex $(x,x_0, \cdots ,x_k)$ and vertex $x$. In total, we again do not change
the balance of even and odd dimensional simplices, what the Euler characteristic is. 
\end{proof}

\paragraph{}
Given a graph $G$ and a field $F$, we make a local Barycentric refinement to
every cyclic triangle $x$ in $G$ and make every arrow from the new vertex $x$
go away from $x$. Now, all triangles are acyclic and we can use the proposition 
to get an index. The index function is $1$ on triangles which are cyclic and $0$ 
else. The {\bf index} of a vertex $v \in V$ is defined as
$$ i_F(v) = 1-\chi(S^-_1(v)) $$ 
where $S_1^-(v) = \{ w \in S_1(x) \; F$ points from $v$ to $w \}$ and
$S_1(v)$ is the unit sphere in the Barycentric refined case. 

\paragraph{}
An other possibility to break a cyclic triangle is to make an edge refinement at one
of the edges. Similarly as the triangle refinement, also this refinements depend
in general on the order in which the refinements are done.

\section{Averaging fields}

\paragraph{}
When averaging the Poincar\'e-Hopf indices over a probability space of locally injective
functions $g$, we got curvature. The curvature depends on the probability measure.
Having choice in averaging produces flexibility which allows to 
implement natural curvatures for convex polyhedra. Instead of averaging
over all possible functions $g$, we could also average over all possible 
base maps $F: G \to V$. If for every simplex $x \in G$, and every $v \in x$
the probability that $F(x)=v$ is independent of $v$, then we get a curvature.

\paragraph{}
The old story can can be understood as starting with an 
energy $\omega(x) = (-1)^{{\rm dim}(x)}$ on the simplices of the graph, for
which $\sum_x \omega(x) = \chi(G)$ is the definition of Euler characteristic. 
The energy $\omega(x)$ is the Poincar\'e-Hopf index of the dimension function
${\rm dim}$ on the vertex set of the Barycentric refinement of $G$. The Poincar\'e-Hopf
index $i_G(x)$ of the original graph with respect to some function $g$ 
is then obtained by moving every energy from a simplex $x$ to the vertex in $x$, 
where $g$ is maximal. One gets the curvature of $G$ by distributing the energy 
$\omega(x)$ equally to every vertex of $x$. 

\paragraph{}
If the field $F$ is irrational, then it is a gradient field in every simplex. There
is therefore an ordering of the simplices. The process of distributing the value 
$\omega(x)$ of a simplex to the largest vertex in that ordering produces the index.

\paragraph{}
Here is a remark. Let $\Omega$ be the probability space of all 
fields $F$ which are irrotational. Put the uniform measure on it and call 
$E[\cdot ]$ the expectation. Denote by 
$$ K(x) = 1+\sum_{k=0} (-1)^k f_k(S(x))/(k+1) $$ 
the {\bf Levitt curvature} on the vertices of the graph, where $f_k(A)$ counts the 
number of $k$-dimensional simplices
of $A$ and $S(x)$ is the unit sphere. 

\begin{lemma}
${\rm E}[i_g] = K$. 
\end{lemma}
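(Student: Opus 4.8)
The plan is to compute the expectation $\mathrm{E}[i_g(x)]$ directly by averaging the index formula $i_F(x)=1-\chi(S^-(x))$ over the uniform measure on irrotational fields and recognizing the result as the Levitt curvature. The key observation is that $\chi(S^-(x)) = \sum_k (-1)^k f_k(S^-(x))$, so by linearity of expectation it suffices to understand $\mathrm{E}[f_k(S^-(x))]$, the expected number of $k$-dimensional simplices of the unit sphere $S(x)$ whose vertices all have ``incoming'' direction relative to $x$. First I would fix a $k$-dimensional simplex $y$ in $S(x)$ and compute the probability that $y$ lies in $S^-(x)$, i.e. that the field points from $x$ into every vertex of $y$. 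The simplex $(x,y)$ together spans a $(k+1)$-dimensional simplex in $G$, and for a gradient field restricted to that simplex there is a total order on its $k+2$ vertices; the event that $x$ is the minimum (so that $F$ points from $x$ toward all of $y$) has probability $1/(k+2)$ by the uniformity and symmetry of the measure on orderings within each simplex.

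Granting that probability computation, I would then substitute to get
\begin{equation*}
\mathrm{E}[i_g(x)] = 1 - \sum_{k\geq 0} (-1)^k \frac{f_k(S(x))}{k+2}.
\end{equation*}
The remaining step is to reconcile this with the stated Levitt curvature $K(x)=1+\sum_{k\geq 0}(-1)^k f_k(S(x))/(k+1)$. The index in the sphere is shifted: a $k$-dimensional simplex $y\subset S(x)$ corresponds to the $(k+1)$-dimensional simplex $(x,y)$, so the relevant denominator counting the vertices over which we distribute the energy $\omega$ equally is $(k+1)+1=k+2$ when viewed in the full simplex but the curvature convention indexes by the dimension in $S(x)$. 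I would therefore carefully match the dimension bookkeeping between the sphere $S(x)$ and the star of $x$, using the Euler characteristic identity $\chi(S(x)) = \sum_k (-1)^k f_k(S(x))$ and the sign factor $(-1)^{k+1}$ that arises from $\omega(x,y)=(-1)^{\dim(x,y)}=(-1)^{k+1}$, which flips the sign and converts the expression into the claimed form.

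The conceptual backbone here is the second ``old story'' paragraph in the excerpt: curvature arises from Poincar\'e-Hopf index by distributing the energy $\omega(x)$ of each simplex \emph{equally} among its vertices rather than pushing it all to the $g$-maximal vertex. Averaging over a symmetric probability space of orderings makes each vertex of a simplex equally likely to receive that simplex's energy, so $\mathrm{E}[i_g]$ is exactly the equidistributed-energy functional, which is Levitt curvature by definition. I would frame the proof around this principle and use the explicit probabilities only to pin down the combinatorial constant.

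The main obstacle I expect is twofold. First, the measure is on \emph{irrotational} fields, not on arbitrary orientations or on locally injective functions $g$, so I must justify that conditioning on each individual simplex the induced distribution on orderings of its vertices is uniform and, crucially, that the events relevant to different simplices interact correctly under the curl-free constraint; the irrotationality means every field is locally a gradient within each simplex (by the proposition above), which is what guarantees a well-defined local order, but verifying that the uniform measure on irrotational fields restricts to the symmetric measure on each simplex's vertex-ordering is the delicate point. Second, getting the denominator and sign conventions to match $K(x)$ exactly requires care, since an off-by-one in the dimension shift between $S(x)$ and the simplices containing $x$ would produce $k+1$ versus $k+2$ and spoil the identity.
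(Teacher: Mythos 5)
Your reduction is essentially the same as the paper's: both arguments hinge on the single claim that, under the uniform measure on irrotational fields, each vertex of a given simplex is equally likely to be the $F$-extremal one, after which the energy $\omega(x)$ of each simplex is distributed equally over its $|x|$ vertices and the Levitt curvature appears. Granted that claim, your computation is correct: a $k$-simplex $y \subset S(x)$ lies in $S^-(x)$ with probability $1/(k+2)$, giving $\mathrm{E}[i(x)] = 1 + \sum_{k\geq 0}(-1)^{k+1} f_k(S(x))/(k+2)$, and this \emph{is} the Levitt curvature --- the paper's displayed formula for $K$ carries an index shift (it should read $\sum_{k\geq 0}(-1)^{k} f_{k-1}(S(x))/(k+1)$ with $f_{-1}=1$) --- so the bookkeeping worry you raise resolves in your favor.

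The genuine gap is exactly the point you flag as ``delicate'' and then leave open: the uniform measure on irrotational fields does \emph{not} induce the uniform distribution on the vertex-orderings of each simplex, so the probability $1/(k+2)$ is wrong in general. Take the wheel graph with hub $a$ and rim cycle $b,c,d,e$. An orientation is irrotational iff every triangle $\{a,u,v\}$ is transitive; if the two spokes $au,av$ agree in direction (both toward or both away from $a$) the rim edge $uv$ is unconstrained ($2$ choices), otherwise it is forced ($1$ choice). Summing over the $16$ spoke patterns gives $82$ irrotational fields; of these, $28$ have both $b \to a$ and $c \to a$, but only $27$ have both $a \to b$ and $c \to b$. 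So $a$ receives the energy of the triangle $\{a,b,c\}$ with probability $28/82 \neq 1/3$, and one computes $\mathrm{E}[i(a)] = 15/41$ while $K(a) = 1 - 4/2 + 4/3 = 1/3$ (the total $\sum_v \mathrm{E}[i(v)] = 1 = \chi$ still checks out, as it must by Poincar\'e--Hopf). Thus the exchangeability you would need to ``verify'' actually fails, and the lemma as stated is false for the uniform measure on irrotational fields; the identity does hold when the expectation is taken over a uniform random locally injective function $g$, the classical index-expectation setting. The paper's own proof stops at precisely the same unproven claim, so you have correctly located the crux --- but completing your outline requires changing the probability space, not just more careful bookkeeping.
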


\begin{proof}
We only have to show that for every simplex $x$, and every two 
vertices $v,w$ in $x$ the probability that $v$ is the largest element
in $F$ is the same than that $w$ is the largest element. 
\end{proof}

\section{The hyperbolic case}

\paragraph{}
The discussion about Non-standard analysis should already indicate why every 
result which holds in the continuum also has an analogue in the discrete. 
Discrete Morse theory and especially \cite{FormanVectorfields} illustrates this. 
This is no surprise as a geometric realization of a discrete simplicial complex
is a continuum space with the same features. Still, one has to look at the discrete
case independent of the continuum and investigate how results from the continuum 
can be obtained without geometric realization. 

\paragraph{}
Let us start with a special Morse case which been mentioned a couple of times: 
if $G$ is a simplicial complex, we can define a graph $\Gamma=(V,E)$
where $V$ are the set of vertices in $G$ and $E$ the set of pairs $(x,y)$
with either $x \subset y$ or $y \subset x$. The Whitney complex $G_1$ of $\Gamma$
is called the Barycentric refinement of $G$. There is a natural choice for every
simplex $x$ in $G_1$. It is the maximal simplex $v$ in $x$. In that case,
$i_F(x) = (-1)^{{\rm dim}}(x)$ and every vertex is a critical point.
The Poincar\'e-Hopf theorem then just tells that the Euler characteristic of
the Barycentric refinement $G_1$ is the same than the Euler characteristic of $G$.

\paragraph{}
The notion of discrete manifolds can be formulated elegantly in an inductive way. 
A {\bf $d$-graph} is a finite simple graph for which all unit spheres $S(x)$ are
$(d-1)$-spheres. A {\bf $d$-sphere} $G$ is a $d$-graph such that $G-x$ is contractible for
some $x \in G$. A graph is {\bf contractible} if there exists $x$ such that $S(x)$ and $G-x$
are both contractible. The inductive definitions are primed by the assumption that
the empty graph $0$ is the $(-1)$-sphere and $1=K_1$ is contractible.
In this case, we have defined a function $g:V \to \mathbb{R}$ to be Morse, if the
central manifolds $B_g(x)$ at every point are either empty, a sphere or a product of
two spheres. In that case the symmetric index $j(x) = [i_g(x)+i_{-g}(x)]/2$ is given
in terms of the central manifold \cite{indexformula}. 

\paragraph{}
For a Morse function $g$, the gradient flow $x'=\nabla g(x)$ is a hyperbolic
system. The generalization in the continuum
to $d$-dimensional smooth manifold is given by
the Sternberg-Grobman-Hartman linearization theorem assures that new a 
hyperbolic equilibrium point $x$ of a differential equation $x'=F(x)$, the 
stable and unstable manifolds $W^{\pm}(x)$ intersect a small sphere $S_r(x)$ 
in $m-1$ and $(d-m-1)$-dimensional spheres. The Poincar\'e-Hopf index $i_F(x)$ is 
$(-1)^m$, where $m=m(x)$ is the Morse index, the dimension of the stable manifold at $x$. 
Poincar\'e-Hopf is then usually formulated as $\chi(M) = \sum_{k} (-1)^k c_k$, where $c_k$
is the number of equilibria with Morse index $k$. By using the dynamics to build a cell
complex, one has also a bound on the Betti numbers $b_m \leq c_m$. 

\paragraph{}
In discrete set-up, we have to assume a bi-directed complex which comes from a $d$-graph.
For every $v \in V$ we have subsets $F^{\pm}(v) \subset G$. A vector field is then
called {\bf hyperbolic} if for every $v$ for which $F^{\pm}(v)$ is not contractible, the 
sets $F^-(v)$ is homotopic to a $(m-1)$-sphere and $F^+(v)$ to a $d-m-1$ sphere and that the join of these two
spheres is homotopic to a $(d-1)$-sphere, the unit sphere in $M$. 
For example, in the case $d=2$, we have either sinks with $m=2$, sources
with $d=0$ or hyperbolic saddle points with $m=1$. In the later case, the sets $F^{\pm}(x)$ are
homotopic to the zero sphere and the join of these two zero spheres is a 1-sphere. 
Note however, that the Betti inequalities $b_m \leq c_m$ are no more true in general (this is the
same as in the continuum). For a circle for example, there are vector fields without any critical points. 
The Betti of the circle are however $b_0=1, b_1=1$. It is only in the case of gradient fields
that we always have a critical point of index $m=0$ (a minimum) and a critical point with index $m=1$
(a maximum). The Reeb sphere theorem (see \cite{knillreeb} for a discussion in the discrete) assures
that spheres are characterized by the existence of Morse functions with exactly two critical points. 

\bibliographystyle{plain}

\begin{thebibliography}{10}

\bibitem{Dlotko2014}
P.~Dlotko and H.~Wagner.
\newblock Simplification of complexes of persistent homology computations.
\newblock {\em Homology, Homotopy and Applications}, 16:49--63, 2014.

\bibitem{Evako2013}
A.V. Evako.
\newblock The {Jordan-Brouwer} theorem for the digital normal n-space space
  {$Z^n$}.
\newblock http://arxiv.org/abs/1302.5342, 2013.

\bibitem{FormanVectorfields}
R.~Forman.
\newblock Combinatorial vector fields and dynamical systems.
\newblock {\em Math. Z.}, 228(4):629--681, 1998.

\bibitem{forman98}
R.~Forman.
\newblock Morse theory for cell complexes.
\newblock {\em Adv. Math.}, page~90, 1998.

\bibitem{Forman1999}
R.~Forman.
\newblock Combinatorial differential topology and geometry.
\newblock {\em New Perspectives in Geometric Combinatorics}, 38, 1999.

\bibitem{HermanDigitalSpaces}
G.T. Herman.
\newblock {\em Geometry of digital spaces}.
\newblock {Birkh\"auser}, {B}oston, {B}asel, {B}erlin, 1998.

\bibitem{HopfVectorfields}
H.~Hopf.
\newblock Vektorfelder in {$n$}-dimensionalen {M}annigfaltigkeiten.
\newblock {\em Math. Ann.}, 96(1):225--249, 1927.

\bibitem{Lanford1986}
O.E.~Lanford III.
\newblock An introduction to computers and numerical analysis.
\newblock In {\em {Ph\'enom\`enes critiques, syst\`emes al\'eatoires, th\'ories
  de jauge, Part I, II, Les Houches, 1984}}, pages 1--86. North-Holland,
  Amsterdam, 1986.

\bibitem{I94a}
A.V. Ivashchenko.
\newblock Graphs of spheres and tori.
\newblock {\em Discrete Math.}, 128(1-3):247--255, 1994.

\bibitem{cherngaussbonnet}
O.~Knill.
\newblock A graph theoretical {Gauss-Bonnet-Chern} theorem.
\newblock {\\}http://arxiv.org/abs/1111.5395, 2011.

\bibitem{poincarehopf}
O.~Knill.
\newblock A graph theoretical {Poincar\'e-Hopf} theorem.
\newblock {\\} http://arxiv.org/abs/1201.1162, 2012.

\bibitem{indexformula}
O.~Knill.
\newblock An index formula for simple graphs \hfill.
\newblock {\\}http://arxiv.org/abs/1205.0306, 2012.

\bibitem{indexexpectation}
O.~Knill.
\newblock On index expectation and curvature for networks.
\newblock {\\}http://arxiv.org/abs/1202.4514, 2012.

\bibitem{knillcalculus}
O.~Knill.
\newblock {The theorems of Green-Stokes,Gauss-Bonnet and Poincare-Hopf in Graph
  Theory}.
\newblock {\\}http://arxiv.org/abs/1201.6049, 2012.

\bibitem{eveneuler}
O.~Knill.
\newblock The {E}uler characteristic of an even-dimensional graph.
\newblock {{\\}http://arxiv.org/abs/1307.3809}, 2013.

\bibitem{KnillBaltimore}
O.~Knill.
\newblock Classical mathematical structures within topological graph theory.
\newblock {{\\}http://arxiv.org/abs/1402.2029}, 2014.

\bibitem{Unimodularity}
O.~Knill.
\newblock On {F}redholm determinants in topology.
\newblock {\\}https://arxiv.org/abs/1612.08229, 2016.

\bibitem{AmazingWorld}
O.~Knill.
\newblock The amazing world of simplicial complexes.
\newblock {{\\}https://arxiv.org/abs/1804.08211}, 2018.

\bibitem{CartanMagic}
O.~Knill.
\newblock Cartan's magic formula for simplicial complexes.
\newblock {{\\}https://arxiv.org/abs/1811.10125}, 2018.

\bibitem{dehnsommervillegaussbonnet}
O.~Knill.
\newblock Dehn-{S}ommerville from {G}auss-{B}onnet.
\newblock {\\}https://arxiv.org/abs/1905.04831, 2019.

\bibitem{EnergizedSimplicialComplexes}
O.~Knill.
\newblock Energized simplicial complexes.
\newblock {\\}https://arxiv.org/abs/1908.06563, 2019.

\bibitem{EnergyComplex}
O.~Knill.
\newblock The energy of a simplicial complex.
\newblock {\\}https://arxiv.org/abs/1907.03369, 2019.

\bibitem{parametrizedpoincarehopf}
O.~Knill.
\newblock A parametrized {P}oincare-{H}opf theorem and clique cardinalities of
  graphs.
\newblock {\\}https://arxiv.org/abs/1906.06611, 2019.

\bibitem{knillreeb}
O.~Knill.
\newblock A {R}eeb sphere theorem in graph theory.
\newblock {\\}https://arxiv.org/abs/1903.10105, 2019.

\bibitem{lanford98}
Oscar~E. Lanford, III.
\newblock Informal remarks on the orbit structure of discrete approximations to
  chaotic maps.
\newblock {\em Experiment. Math.}, 7(4):317--324, 1998.

\bibitem{LawlerInternalSetTheory}
G.~F. Lawler.
\newblock Comments on edward nelson's internal set theory: a new approach to
  nonstandard analysis.
\newblock {\em Bulletin (New Series) of the AMS}, 4:503--506, 2011.

\bibitem{Nelson77}
E.~Nelson.
\newblock Internal set theory: A new approach to nonstandard analysis.
\newblock {\em Bull. Amer. Math. Soc}, 83:1165--1198, 1977.

\bibitem{Nelson}
E.~Nelson.
\newblock {\em Radically elementary probability theory}.
\newblock Princeton university text, 1987.

\bibitem{NelsonSimplicity}
E.~Nelson.
\newblock The virtue of simplicity.
\newblock In {\em The Strength of Nonstandard Analysis}, pages 27--32.
  Springer, 2007.

\bibitem{Poincare1885}
H.~Poincar{\'e}.
\newblock Sur les courbes {d\'efinies} par les \'equations differentielles.
\newblock {\em Journ. de Math}, 4, 1885.

\bibitem{Ran74}
F.~Rannou.
\newblock Numerical study of discrete area-preserving mappings.
\newblock {\em Acta Arithm}, 31:289--301, 1974.

\bibitem{Robert}
A.~Robert.
\newblock {\em Analyse non standard}.
\newblock Presses polytechniques romandes, 1985.

\bibitem{RomeroSergeraert}
A.~Romero and F.~Sergeraert.
\newblock Discrete vector fields and fundamental algebraic topology.
\newblock Version 6.2. University of Grenoble, 2012.

\bibitem{Spivak1999}
M.~Spivak.
\newblock {\em A comprehensive Introduction to Differential Geometry I-V}.
\newblock Publish or Perish, Inc, Berkeley, third edition, 1999.

\bibitem{ZhVi88}
X-S. Zhang and F.~Vivaldi.
\newblock Small perturbations of a discrete twist map.
\newblock {\em Ann. Inst. H. Poincar\'e Phys. Th\'eor.}, 68:507--523, 1998.

\end{thebibliography}

\end{document}